 \newtheorem{thm}{Theorem}[section]
 \newtheorem{cor}[thm]{Corollary}
 \newtheorem{lem}[thm]{Lemma}
 \newtheorem{prop}[thm]{Proposition}
 \theoremstyle{definition}
 \theoremstyle{remark}
 \numberwithin{equation}{section}
\def\Pr{\mathbb P}
\def\Ex{\mathbb E}
\def\Var{\mathrm{Var}}
\def\zet{\mathbb Z}
\begin{document}

%
%
%
%
%
%
%
%
%

\title{Maximal inequalities for centered 
norms of sums of independent random vectors}
\author{Rafa{\l} Lata{\l}a\thanks{Research partially supported by MNiSW Grant no. N N201 397437.}}
\date{}




\maketitle

\begin{abstract}
Let $X_1,X_2,\ldots,X_n$ be independent random variables and $S_k=\sum_{i=1}^k X_i$. We show that for
any constants $a_k$,
\[
\Pr(\max_{1\leq k\leq n}||S_{k}|-a_{k}|>11t)\leq 30
\max_{1\leq k\leq n}\Pr(||S_{k}|-a_{k}|>t).
\]
We also discuss similar inequalities for sums of Hilbert and Banach space valued random vectors.
\end{abstract}

\section{Introduction and Main Results}

Let $X_1,X_2,\ldots$ be independent random vectors in a separable Banach space $F$.
The L\'evy-Ottaviani maximal inequality (see e.g. Proposition 1.1.1 in \cite{KW}) states that for any $t>0$,
\begin{equation}
\label{L-O}
\Pr\Big(\max_{1\leq k\leq n}\|S_{k}\|>3t\Big)\leq 3\max_{1\leq k\leq n}\Pr(\|S_{k}\|>t),
\end{equation}
where here and in the rest of this note,
\[
S_k=\sum_{i=1}^k X_i \quad \mbox{ for } k=1,2,\ldots.
\]
If, additionally, variables $X_i$ are symmetric then the classical L\'evy inequality gives the sharper bound
\[
\Pr\Big(\max_{1\leq k\leq n}\|S_{k}\|>t\Big)\leq 2\Pr(\|S_{n}\|>t).
\]
Montgomery-Smith \cite{MS} showed that if we replace symmetry assumptions by the identical distribution then
\begin{equation}
\label{M-S}
\Pr\Big(\max_{1\leq k\leq n}\|S_{k}\|>C_1t\Big)\leq C_2\Pr(\|S_{n}\|>t),
\end{equation}
where one may take $C_1=30$ and $C_2=9$.

Maximal inequalities are fundamental tools in the study of convergence of random series and 
limit theorems for sums of independent random vectors (see e.g. \cite{KW} and \cite{LT}).

In some applications one needs to investigate asymptotic behaviour of centered norms of sums, i.e. random variables
of the form $(\|S_n\|-a_n)/b_n$ (cf. \cite{GM}). For such purpose it is natural to ask whether in \eqref{L-O} or \eqref{M-S}
one may replace variables $\|S_k\|$ by $|\|S_k\|-a_k|$. The answer turns out to be positive in the real case.

\begin{thm}
\label{maxreal}
Let $X_{1},X_{2},\ldots,X_{n}$ be independent real r.v.'s. Then
for any numbers $a_{1},a_{2},\ldots,a_{n}$ and $t>0$,
\begin{equation}
\label{real0}
\Pr\Big(\max_{1\leq k\leq n}||S_{k}|-a_{k}|>11t\Big)\leq 30
  \max_{1\leq k\leq n}\Pr(||S_{k}|-a_{k}|>t).
\end{equation}
\end{thm}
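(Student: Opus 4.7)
The plan is a L\'evy--Ottaviani-type argument adapted to the centered norms $Y_k:=||S_k|-a_k|$. The naive attempt---comparing the first-passage time $\tau=\min\{k:Y_k>11t\}$ against a fixed terminal index $n$ via the independence of $S_n-S_k$---breaks down because $|Y_n-Y_k|$ is not controlled by $|S_n-S_k|$ alone once the target $a_k$ varies with $k$. Instead one must match each value of the stopping time with a comparison index chosen according to the geometry of $a_1,\dots,a_n$.

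Set $\alpha:=\max_{k\leq n}\Pr(Y_k>t)$ and assume $\alpha<1/30$ (else there is nothing to prove). Decompose $\{Y_k>11t\}$ into three parts according to the reason $Y_k$ is large:
\[
A_k=\{S_k>a_k+11t\},\quad B_k=\{S_k<-a_k-11t\},\quad C_k=\{|S_k|<a_k-11t\},
\]
the last of which forces $a_k>11t$. It suffices to show that each of $\Pr(\bigcup_kA_k)$, $\Pr(\bigcup_kB_k)$, $\Pr(\bigcup_kC_k)$ is at most $10\alpha$.

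For case (A) let $\tau_A:=\min\{k:S_k>a_k+11t\}$ and choose the deterministic comparison index $j_k:=\arg\min_{j\geq k}a_j$. On $\{\tau_A=k,\,|S_{j_k}-S_k|<10t\}$ one has $S_{j_k}>S_k-10t>a_k+t\geq a_{j_k}+t$, so $Y_{j_k}>t$. Therefore
\[
\Pr(\tau_A=k)\leq\Pr(\tau_A=k,\,Y_{j_k}>t)+\Pr(\tau_A=k)\,\Pr(|S_{j_k}-S_k|\geq 10t),
\]
using independence of $\{\tau_A=k\}$ from $S_{j_k}-S_k$. Case (B) is handled by the same $j_k$ (giving $|S_{j_k}|=-S_{j_k}>a_{j_k}+t$); case (C) uses $j_k:=\arg\max_{j\geq k}a_j$ together with the dual bound $|S_{j_k}|\leq|S_k|+10t<a_k-t\leq a_{j_k}-t$.

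The main obstacle is extracting useful estimates from the above display. Two tasks remain: bound $\Pr(|S_{j_k}-S_k|\geq 10t)$ uniformly by a constant well below $1$, and bound $\sum_k\Pr(\tau_A=k,\,Y_{j_k}>t)$ by a small multiple of $\alpha$. For the first I would apply the L\'evy--Ottaviani inequality \eqref{L-O} to the partial sums of $X_{k+1},\dots,X_n$ and then translate the resulting event $\{|S_j-S_k|>\text{threshold}\}$ into an event $\{Y_m>t\}$ via a case analysis on the sizes of $a_k$ and $a_m$. For the second, since $\{\tau_A=k\}$ are disjoint, the sum equals $\Pr\!\bigl(\bigcup_k\{\tau_A=k,\,Y_{j_k}>t\}\bigr)$, and the delicate point is to confine this union to events $\{Y_m>t\}$ indexed by a short list of $m$'s determined by the level sets of $k\mapsto\arg\min_{j\geq k}a_j$. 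Combining the three cases and carefully tracking the constants should produce the factors $11$ and $30$.
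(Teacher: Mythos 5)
Your skeleton correctly identifies where the difficulty lies, but both of the steps you defer to the end are not merely technical: as formulated they fail, and the missing ideas are exactly the content of the paper's proof. First, the uniform bound $\Pr(|S_{j_k}-S_k|\geq 10t)\leq c<1$ is false in general. The hypothesis $Y_m\leq t$ only localizes $S_m$ to a union of \emph{two} intervals around $\pm a_m$, so an increment $S_{j}-S_k$ can sit near $-(a_j+a_k)$ with probability close to $1$ while every $Y_m$ vanishes: take $X_1\equiv a$, $X_2\equiv -2a$, $a_1=a_2=a$, so that $Y_1=Y_2=0$ almost surely but $|S_2-S_1|=2a$ is arbitrarily large compared to $t$. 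No case analysis on the sizes of $a_k,a_m$ can repair this, because the sizes do not determine near which of the two signed branches each partial sum concentrates. The paper's proof exists to circumvent precisely this: it introduces an independent copy $\tilde S_k$ and splits the indices according to whether $\Pr(|S_k-\tilde S_k|>2t)$ is large (the set $I_2$, where a combinatorial lemma forces $|a_k-a_n|\leq 2t$ and $\Pr(|S_{k+1,n}|\leq 4t)\geq 1/5$, so a stopping-time comparison to the single index $n$ works) or small (the set $I_3$, where $S_k$ concentrates near \emph{one} signed value $b_k=\pm a_k$, and one applies L\'evy--Ottaviani to the shifted sums $S_k+b_k$ rather than to increments). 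You have no mechanism playing the role of this dichotomy.

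Second, the reduction of $\sum_k\Pr(\tau_A=k,\,Y_{j_k}>t)$ to a bounded multiple of $\alpha$ cannot go through via the level sets of $k\mapsto\arg\min_{j\geq k}a_j$: if $(a_k)$ is strictly increasing then $j_k=k$ for every $k$, the map has $n$ level sets, and your key display degenerates to the tautology $\Pr(\tau_A=k)\leq\Pr(\tau_A=k,\,Y_k>t)$ (since $Y_k>11t$ on $\{\tau_A=k\}$), yielding no information. In general the comparison index must be a \emph{single} deterministic index (the paper always uses $n$), and the price of that choice --- controlling $|a_k-a_n|$ and the increment $S_{k+1,n}$ --- is paid by the symmetrization argument above, which your proposal does not contain. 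So the proposal as it stands has genuine gaps in both remaining tasks and does not constitute a proof.
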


\noindent
{\bf Example.} Let $Y_1,Y_2,\ldots$ be i.i.d. r.v.'s such that $\Ex Y_i^2=1$ and $\Var(Y_i^2)<\infty$.
Let $S_k=\sum_{i=1}^k X_i$, where $X_i=e_iY_i$ and $(e_i)$ is an orthonormal system in a Hilbert space 
$\mathcal{H}$; also let $|x|$ denote the norm of a vector $x\in\mathcal{H}$.
Then for $t>0$,
\[
\Pr(||S_k|-\sqrt{k}|\geq t)\leq \Pr(||S_k|^2-k|\geq t\sqrt{k})\leq \frac{\Var(|S_k|^2)}{t^2 k}=
\frac{\Var(Y_1^2)}{t^2}.
\]
On the other hand if we choose $j_0$ such that $2^{j_0/2}\geq t$, then for $n\geq 2^{j_0}$,
\begin{align*}
p_n&:=\Pr\Big(\max_{1\leq k\leq n}||S_k|-\sqrt{k}|\geq t\Big)\geq
\Pr\Big(\max_{2^{j_0}\leq k\leq n}(|S_k|^2-k)\geq 3t\sqrt{k}\Big)
\\
&\geq 
\Pr\bigg(\bigcup_{j_0\leq j\leq \log_2 n}\Big\{|S_{2^j}|^2-2^{j}\geq 3\cdot 2^{j/2}t\Big\}\bigg)
\\
&\geq \Pr\bigg(\bigcup_{j_0+1\leq j\leq \log_2 n}\Big\{2^{-j/2}\sum_{i=2^{j-1}+1}^{2^j}(Y_i^2-1)\geq 6t\Big\}\bigg)
\end{align*}
and $\lim_{n\rightarrow\infty}p_n=1$ for any $t>0$ by the CLT. It is not hard to modify this example in such 
a way that $X_i$ be an i.i.d. sequence. 

\medskip

Hence Theorem \ref{maxreal} does not hold in infinite dimensional Hilbert spaces even if we assume
that $X_i$ are symmetric and identically distributed. However a modification of \eqref{real0} is satisfied
in Hilbert spaces.

\begin{prop}
\label{maxsqr}
Let $X_{1},\ldots,X_{n}$ be independent symmetric r.v.'s with values in a separable
Hilbert space $({\mathcal H},|\ |)$. Then for any sequence of real numbers
$a_{1},\ldots,a_{n}$  and $t\geq 0,$ 
\[
\Pr\Big(\max_{1\leq k\leq n}\big||S_{k}|^{2}-a_{k}\big|\geq 3t\Big)\leq
6\max_{1\leq k\leq n}\Pr\big(\big||S_{k}|^{2}-a_{k}\big|\geq t\big).
\]
\end{prop}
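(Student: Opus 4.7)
My plan is to adapt the Ottaviani argument behind \eqref{L-O} to the process $|S_k|^2-a_k$, using the parallelogram identity
\[
|S_n|^2+|2S_k-S_n|^2 \;=\; 2|S_k|^2+2|S_n-S_k|^2
\]
in place of the triangle inequality. Set $p:=\max_k\Pr(||S_k|^2-a_k|\geq t)$; if $p\geq 1/6$ the inequality is trivial, so I may assume $p<1/6$. Let $\tau:=\min\{k\leq n:||S_k|^2-a_k|\geq 3t\}$, and for each fixed $k$ abbreviate $U:=|S_k|^2-a_k$, $V:=|S_n-S_k|^2-(a_n-a_k)$, $A:=|S_n|^2-a_n$, $B:=|2S_k-S_n|^2-a_n$, so the identity reads $A+B=2(U+V)$.

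I would then split $\Pr(\tau=k)$ according to whether $|V|\leq 2t$ or $|V|>2t$. On $\{\tau=k,|V|\leq 2t\}$ one has $|U|\geq 3t$, hence $|U+V|\geq t$, and therefore $\max(|A|,|B|)\geq |A+B|/2\geq t$. Because $X_{k+1},\dots,X_n$ are symmetric, the sign-flip $X_i\mapsto -X_i$ for $i>k$ is a measure-preserving involution that fixes the $\mathcal{F}_k$-measurable event $\{\tau=k\}$ and interchanges $A$ with $B$; hence $\Pr(\{\tau=k\}\cap\{|B|\geq t\})=\Pr(\{\tau=k\}\cap\{|A|\geq t\})$, and a union bound gives $\Pr(\tau=k,|V|\leq 2t)\leq 2\Pr(\{\tau=k\}\cap\{|A|\geq t\})$. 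Summation over $k$ yields a total of at most $2\Pr(|A|\geq t)\leq 2p$.

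On $\{\tau=k,|V|>2t\}$ independence of $\mathcal{F}_k$ and $(X_{k+1},\dots,X_n)$ factors the probability as $\Pr(\tau=k)\Pr(|V|>2t)$. The key step is the sharper estimate $\Pr(|V|>2t)\leq 3p$: from $V=(A+B)/2-U$ one gets $|V|\leq (|A|+|B|)/2+|U|$, so $|V|>2t$ forces either $|U|>t$ (probability $\leq p$) or $|U|\leq t$ together with $\max(|A|,|B|)>t$ (probability $\leq 2p$, using $B\overset{d}{=}A$ because $2S_k-S_n=S_k-(S_n-S_k)\overset{d}{=}S_n$ by symmetry of $S_n-S_k$). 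Summing this contribution over $k$ yields at most $3p\,\Pr(\tau\leq n)$.

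Combining, $\Pr(\tau\leq n)\leq 2p+3p\,\Pr(\tau\leq n)$, i.e.\ $\Pr(\tau\leq n)\leq 2p/(1-3p)\leq 4p\leq 6p$ once $p<1/6$. The delicate point is the $3p$ bound on $\Pr(|V|>2t)$: the naive estimate $6p$, obtained by separately handling $V>2t$ and $V<-2t$, would plug into the Ottaviani recursion to give a final constant worse than $6$; the argument closes only because one may simultaneously exploit the unconditional equidistribution $B\overset{d}{=}A$ and the bound $|V|\leq(|A|+|B|)/2+|U|$.
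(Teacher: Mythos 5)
Your proof is correct and follows essentially the same route as the paper: the parallelogram identity is just the cross-term expansion $|S_n|^2=|S_k|^2+|S_{k+1,n}|^2+2\langle S_k,S_{k+1,n}\rangle$ combined with its sign-flipped version, and both arguments rest on the symmetry of the tail sum $S_{k+1,n}$, a bound of order $p$ on $\Pr\big(\big||S_{k+1,n}|^2-(a_n-a_k)\big|> 2t\big)$, and a first-passage decomposition at $\tau$. The only differences are organizational: you close with an Ottaviani-type self-referential inequality and obtain $3p$ where the paper gets $4p$ (hence a slightly better final constant), whereas the paper proves a pointwise implication for fixed $x=S_k$ and bounds $\Pr(\tau=k,||S_n|^2-a_n|\geq t)\geq\tfrac16\Pr(\tau=k)$ directly.
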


A first consequence of Proposition \ref{maxsqr} is the following Hilbert-space version of \eqref{real0} 
under a regularity assumption on coefficients $(a_k)$.

\begin{cor}
\label{maxrega}
Let $X_{1},\ldots,X_{n}$ be as in Proposition \ref{maxsqr}, $1\leq i\leq n$ and nonnegative real numbers 
$a_{i},\ldots,a_{n}$, $\alpha,$ $\beta$  and $t$ satisfy the condition 
\begin{equation}
\label{coef}
a_k\leq \alpha a_l+\beta t \quad  \mbox{for all } i\leq k,l\leq n. 
\end{equation}
Then
\[
\Pr\Big(\max_{i\leq k\leq n}||S_{k}|-a_{k}|\geq (6\alpha+2\beta+1)t\Big)\leq
6\max_{i\leq k\leq n}\Pr\big(||S_{k}|-a_{k}|\geq t\big).
\]
\end{cor}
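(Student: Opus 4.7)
The plan is to apply Proposition~\ref{maxsqr} to the squared norms $|S_k|^2$ with coefficient sequence $(a_k^2)$, and then translate the resulting bound back to $||S_k|-a_k|$ using the elementary factorization
\[
||S_k|^2-a_k^2|=||S_k|-a_k|\cdot(|S_k|+a_k).
\]
Setting $a:=\min_{i\le k\le n}a_k$ and $s:=2(\alpha a+\beta t)\,t+t^2$, Proposition~\ref{maxsqr} gives
\[
\Pr\Bigl(\max_{i\le k\le n}\bigl||S_k|^2-a_k^2\bigr|\ge 3s\Bigr)\le 6\max_{i\le k\le n}\Pr\bigl(\bigl||S_k|^2-a_k^2\bigr|\ge s\bigr).
\]
The whole corollary would then follow from two deterministic inclusions between $||S_k|^2-a_k^2|$-events and $||S_k|-a_k|$-events.

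The first, easy inclusion is $\{||S_k|^2-a_k^2|\ge s\}\subset\{||S_k|-a_k|\ge t\}$ for each $k$, which handles the right-hand side of the target estimate. Using $|S_k|+a_k\le 2a_k+||S_k|-a_k|$, if $||S_k|-a_k|<t$ then
\[
||S_k|^2-a_k^2|<t(2a_k+t)\le t\bigl(2(\alpha a+\beta t)+t\bigr)=s,
\]
where the hypothesis $a_k\le\alpha a+\beta t$ was used in the middle step; contraposing yields the desired inclusion.

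The delicate step is the reverse inclusion $\{||S_k|-a_k|\ge Ct\}\subset\{||S_k|^2-a_k^2|\ge 3s\}$ with $C:=6\alpha+2\beta+1$. Write $u:=||S_k|-a_k|$; one has $|S_k|+a_k\ge u$ always, and more: if $|S_k|\ge a_k$ then $|S_k|+a_k=2a_k+u$, whereas if $|S_k|\le a_k$ then necessarily $a_k\ge u$ and $|S_k|+a_k\ge a_k$. I would split according to whether $a_k\ge Ct$ or $a_k<Ct$. In the first regime, the hypothesis $a_k\le\alpha a+\beta t$ combined with $a_k\ge Ct$ forces $\alpha a\ge (C-\beta)t$, which (when $\alpha>0$, the only case where this regime can occur) gives a strong lower bound on $a$, and one checks that $||S_k|^2-a_k^2|\ge Ct\cdot a_k\ge 3s$. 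In the second regime, only the alternative $|S_k|\ge a_k$ can occur, so $||S_k|^2-a_k^2|\ge u(2a_k+u)\ge C^2t^2+2a_k Ct$, and a direct manipulation using $a_k\le\alpha a+\beta t$ again yields $\ge 3s$. The constant $C=6\alpha+2\beta+1$ is calibrated exactly so that both regime checks match the single threshold $s$.

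The main obstacle is this second inclusion: the bookkeeping of the two-regime case analysis and the balanced use of the hypothesis in each regime is the crux of the argument, and it is what pins down the constant $6\alpha+2\beta+1$. Once both inclusions are verified, combining them with Proposition~\ref{maxsqr} applied at threshold $s$ produces the corollary.
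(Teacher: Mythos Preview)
Your overall architecture---apply Proposition~\ref{maxsqr} to the sequence $a_k^2$ and convert back via $||S_k|^2-a_k^2|=||S_k|-a_k|\,(|S_k|+a_k)$---is exactly the paper's idea, and your ``easy'' inclusion $\{||S_k|^2-a_k^2|\ge s\}\subset\{||S_k|-a_k|\ge t\}$ is correct and matches the paper's \eqref{incl}. The problem is the reverse inclusion. In your second regime ($a_k<Ct$) you assert that ``a direct manipulation using $a_k\le\alpha a+\beta t$'' gives $u(2a_k+u)\ge 3s$, but that hypothesis is an \emph{upper} bound on $a_k$ and cannot help you bound $u(2a_k+u)$ from below. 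In fact the inclusion $\{||S_k|-a_k|\ge Ct\}\subset\{||S_k|^2-a_k^2|\ge 3s\}$ is false without an additional lower bound on $a$: take $\alpha=\beta=0$ (so all $a_k=0$, $C=1$, $s=t^2$); then you would need $\{|S_k|\ge t\}\subset\{|S_k|^2\ge 3t^2\}$, which fails whenever $t\le |S_k|<\sqrt{3}\,t$. More generally, writing out your needed inequality gives $(C^2-6\beta-3)t^2+(2C-6\alpha)at\ge 0$, and the first coefficient is negative for small $\alpha,\beta$, so the inequality breaks when $a$ is small.

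The fix---and this is what the paper does---is to split \emph{globally} on $a=\min_k a_k$ rather than locally on $a_k$. If $a\le 3t$ then every $a_k\le(3\alpha+\beta)t$ by \eqref{coef}, so $||S_k|-a_k|\ge Ct$ forces $|S_k|\ge(3\alpha+\beta+1)t$, and the classical L\'evy inequality (not Proposition~\ref{maxsqr}) gives the bound directly with constant $2$. If instead $a\ge 3t$, then your reverse inclusion \emph{does} hold, and with no case split on $a_k$: simply $||S_k|^2-a_k^2|\ge Ct\cdot a_k\ge Ct\cdot a$, and $Cta-3s=(2\beta+1)t(a-3t)\ge 0$. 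So your plan is essentially right once you insert the small-$a$ case handled by L\'evy; the constant $6\alpha+2\beta+1$ is indeed calibrated by the requirement $\tfrac{1}{3}Ca\ge 2(\alpha a+\beta t)+t$ in the regime $a\ge 3t$.
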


In proofs of limit theorems one typically applies maximal inequalities to uniformly estimate  $\|S_k\|$ 
for $cn\leq k\leq n$, where $c$ is some constant. Next two corollaries show that if we restrict
$k$ to such a group of indices then, under i.i.d. and symmetry assumptions, \eqref{real0} holds in Hilbert spaces.

\begin{cor}
\label{max_in_group}
Let $X_{1},X_{2},\ldots,X_n$ be symmetric i.i.d. r.v.'s with values in a separable
Hilbert space $({\mathcal H},|\ |)$. Then for any integer $i$ such that $\frac{n}{2}\leq i\leq n$
and any sequence of positive numbers $a_{i},\ldots,a_{n}$ and $t\geq 0$ we have
\[
\Pr\Big(\max_{i\leq k\leq n}||S_{k}|-a_{k}|\geq 19t\Big)
\leq
6\max_{i\leq k\leq n}\Pr(||S_{k}|-a_{k}|\geq t).
\]
\end{cor}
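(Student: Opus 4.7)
The plan is to apply Corollary \ref{maxrega} with $\alpha = 2$ and $\beta = 3$, since then the constant $6\alpha+2\beta+1$ is exactly $19$. To do this, I need to verify the hypothesis \eqref{coef}, namely $a_k \le 2 a_l + 3t$ for all $i \le k, l \le n$. As a preliminary reduction I would assume $p := \max_{i \le k \le n} \Pr(||S_k| - a_k| \ge t) < 1/6$, since otherwise the right-hand side of the desired inequality is at least $1$ and there is nothing to prove.

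The main step is a symmetric comparison: for $i \le l \le k \le n$, I want to show $a_k \le 2 a_l + 3t$ (the reverse inequality follows by swapping the roles of $k$ and $l$). The key observation is that by i.i.d.\ the vector $S_k - S_l$ is distributed as $S_{k-l}$, and the hypothesis $i \ge n/2$ forces $k - l \le n - i \le n/2 \le i \le l$. Hence by L\'evy's inequality for symmetric Hilbert-space-valued sums,
\[
\Pr(|S_k - S_l| \ge s) = \Pr(|S_{k-l}| \ge s) \le 2 \Pr(|S_l| \ge s)
\]
for every $s \ge 0$. Choosing $s = a_l + t$ makes the right-hand side at most $2p < 1/3$. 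On the other hand, the event $\{||S_k| - a_k| < t,\ ||S_l| - a_l| < t\}$ has probability greater than $1 - 2p > 2/3$, and on it the triangle inequality gives $|S_k - S_l| \ge |S_k|-|S_l| > a_k - a_l - 2t$. Since the two events have probabilities summing to more than $1$, they must intersect, yielding $a_k - a_l - 2t < a_l + t$, i.e., $a_k < 2 a_l + 3t$.

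I expect the main obstacle to be recognising that the condition $i \ge n/2$ is exactly what permits L\'evy's inequality to bridge $|S_{k-l}|$ (a partial sum of at most $n/2$ terms) and $|S_l|$ (a partial sum of at least $n/2$ terms); without this constraint one could not uniformly control $|S_k - S_l|$ from the behaviour of $|S_l|$. Once the comparability of the $a_k$'s is established, Corollary \ref{maxrega} finishes the proof immediately.
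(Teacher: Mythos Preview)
Your proof is correct and follows the same overall plan as the paper: verify condition~\eqref{coef} with $\alpha=2$, $\beta=3$ and then invoke Corollary~\ref{maxrega}. The verification of $a_k\le 2a_l+3t$ is, however, carried out differently. The paper extends the i.i.d.\ sequence beyond index $n$, bounds $\Pr(|S_{2k}|\ge 2a_k+2t)\le 2\Pr(|S_k|\ge a_k+t)\le 1/3$, and then uses symmetry (both $S_l\pm S_{l+1,2k}$ are distributed as $S_{2k}$) plus the triangle inequality to force $a_l-t\le|S_l|\le 2a_k+2t$ with positive probability. You instead stay within $\{1,\ldots,n\}$ and use L\'evy's inequality: since $k-l\le l$, the increment $|S_k-S_l|\stackrel{d}{=}|S_{k-l}|$ satisfies $\Pr(|S_{k-l}|\ge s)\le 2\Pr(|S_l|\ge s)$, which combined with the triangle inequality on the high-probability event $\{||S_k|-a_k|<t,\ ||S_l|-a_l|<t\}$ gives the bound directly. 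Your route is slightly cleaner in that it avoids the artificial extension past index $n$; the paper's route avoids appealing to L\'evy's inequality at this stage. One small remark: your ``swapping'' comment is a bit imprecise --- what you actually prove is $a_k\le 2a_l+3t$ for $l\le k$, and a literal swap of labels just reproduces the same statement. The remaining case $a_l\le 2a_k+3t$ (for $l\le k$) needs the same argument rerun with $s=a_k+t$ and L\'evy applied in the form $\Pr(|S_{k-l}|\ge s)\le 2\Pr(|S_k|\ge s)$, which uses only the trivial $k-l\le k$ and is therefore even easier.
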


\begin{proof}
We may obviously assume that
\[
\max_{i\leq k\leq n}\Pr(||S_{k}|-a_{k}|\geq t)\leq \frac{1}{6}.
\]
Observe that for any $k<l$, the random variable $S_{k,l}:=\sum_{i=k}^{l}X_i$ has the same distribution as 
$S_{l-k+1}$.

Take $k,l\in\{i,\ldots,n\}$, then 
\begin{align*}
\Pr(|S_{2k}|\geq 2a_{k}+2t)
&\leq
\Pr(|S_{k}|\geq a_{k}+t)+\Pr(|S_{k+1,2k}|\geq a_{k}+t)
\\
&= 2\Pr(|S_{k}|\geq a_{k}+t)\leq \frac{1}{3}.
\end{align*}
Therefore
\begin{align*}
\Pr(&a_{l}-t\leq |S_{l}|\leq 2a_{k}+2t)
\\
&\geq
 \Pr(a_{l}-t\leq |S_{l}|,\ |S_{l}+S_{l+1,2k}|\leq 2a_{k}+2t,\ |S_{l}-S_{l+1,2k}|\leq 2a_{k}+2t)
\\
&\geq 1-\Pr(|S_{l}|<a_{l}-t)-2\Pr(|S_{2k}|> 2a_{k}+2t)\geq
  1-\frac{1}{6}-\frac{2}{3}>0,
\end{align*}
where in the second inequality we used the symmetry of $X_{i}$. Hence we get $a_{l}\leq 2a_{k}+3t$ and
we may apply Corollary \ref{maxrega} with $\alpha=2$ and $\beta=3$.
\end{proof}

\begin{cor}
Let $X_{1},X_{2},\ldots,X_n$ be as before. Then for any $\frac{n}{2^j}\leq i\leq n$
and any sequence of positive numbers $a_{i},\ldots,a_{n}$ and $t\geq 0$ we have
\[
\Pr\Big(\max_{i\leq k\leq n}||S_{k}|-a_{k}|\geq 19t\Big)
\leq
6j\max_{i\leq k\leq n}\Pr(||S_{k}|-a_{k}|\geq t).
\]
\end{cor}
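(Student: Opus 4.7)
The plan is to decompose the index range $\{i,\ldots,n\}$ into at most $j$ consecutive blocks, on each of which the previous corollary (Corollary~\ref{max_in_group}) applies, and then take a union bound. The hypothesis $i\geq n/2^j$ is exactly what ensures the dyadic depth from $n$ down to $i$ is at most $j$.

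I would define $m_0:=n$ and $m_r:=\lceil m_{r-1}/2\rceil$ for $r\geq 1$. A short induction using $\lceil x/2\rceil\leq (x+1)/2$ gives $m_r<n/2^r+1$, so in particular $m_j<n/2^j+1\leq i+1$, and since $m_j$ is a positive integer this forces $m_j\leq i$. Let $j'$ be the smallest index with $m_{j'}\leq i$, so that $j'\leq j$, and set
\[
B_r:=[\max(i,m_r),\,m_{r-1}]\quad\text{for }r=1,\ldots,j'.
\]
These blocks are contained in $\{i,\ldots,n\}$ and together cover it, and the recursion forces $m_{r-1}\leq 2m_r$, so the lower endpoint satisfies $\max(i,m_r)\geq m_r\geq m_{r-1}/2$.

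I would then apply Corollary~\ref{max_in_group} to each block $B_r$, with $m_{r-1}$ playing the role of ``$n$'' and $\max(i,m_r)$ the role of ``$i$'', to obtain
\[
\Pr\Big(\max_{k\in B_r}||S_k|-a_k|\geq 19t\Big)\leq 6\max_{k\in B_r}\Pr(||S_k|-a_k|\geq t)\leq 6M,
\]
where $M:=\max_{i\leq k\leq n}\Pr(||S_k|-a_k|\geq t)$. A union bound over the $j'\leq j$ blocks then gives the claimed inequality. There is no real obstacle; the only mild subtlety is verifying that the ceilings in the recursion do not push $m_j$ strictly above $i$, which is handled by the strict inequality $m_r<n/2^r+1$ together with integrality.
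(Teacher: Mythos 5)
Your proof is correct and is exactly the argument the paper intends: the corollary is stated without proof as an immediate consequence of Corollary~\ref{max_in_group}, obtained by splitting $\{i,\ldots,n\}$ into at most $j$ dyadic blocks and applying a union bound. Your careful handling of the ceilings (showing $m_j\leq i$ and $m_{r-1}\leq 2m_r$) fills in the only details worth checking.
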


Corollary \ref{max_in_group} naturally leads to the formulation of the following open question.

\medskip

\noindent
{\bf Question.} Characterize all separable Banach spaces $(E,\|\ \|)$ with the following property.
There exist constants $C_1,C_2<\infty$ such that for any symmetric i.i.d. r.v.'s $X_{1},X_{2},\ldots,X_n$
with values in $E$, any $\frac{n}{2}\leq i\leq n$, any positive constants $a_i,\ldots,a_n$ and $t>0$,
\begin{equation}
\label{max_quest}
\Pr\Big(\max_{i\leq k\leq n}|\|S_{k}\|-a_{k}|\geq C_1t)
\leq
C_2\max_{i\leq k\leq n}\Pr(|\|S_{k}\|-a_{k}|\geq t). 
\end{equation}
In particular does the above inequality hold in $L^p$ with $1<p<\infty$?

\medskip

In the last section of the paper we present an example showing that in a general separable Banach space 
estimate \eqref{max_quest} does not hold.

\section{Proofs}

Below we will use the following notation. By $\tilde{X}_1,\tilde{X}_2,\ldots$ we will
denote the independent copy of the random sequence $X_1,X_2,\ldots$. We put
\[
\tilde{S}_{k}:=\sum_{i=1}^{k}\tilde{X}_{i},\quad
S_{k,n}:=S_{n}-S_{k-1}=\sum_{i=k}^{n}X_{i}.
\]

We start with the following simple lemma.

\begin{lem}
\label{lem1}
Suppose that  real numbers $x,$ $y,$ $a,$ $b$ and $u$ satisfy the conditions
$||x|-a|\leq u,$ $||y|-a|\leq u,$ $||x+s|-b|\leq u,$ $||y+s|-b|\leq u$
and $|x-y|> 2u$. Then $|a-b|\leq 2u$ and $|s|\leq 4u$.
\end{lem}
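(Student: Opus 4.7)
The plan is to exploit the tension between the hypothesis $|x-y|>2u$ and the four near-equality conditions, which together force $x$ and $y$ (and also $x+s$ and $y+s$) to have opposite signs. Once the signs are pinned down the conclusions follow from a few applications of the triangle inequality.

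First I would observe that the conditions $||x|-a|\leq u$ and $||y|-a|\leq u$ give $||x|-|y||\leq 2u$. Combined with $|x-y|>2u\geq ||x|-|y||$, this rules out the case $xy\geq 0$ (in which $|x-y|=||x|-|y||$), so $x$ and $y$ must have strictly opposite signs. The same reasoning applied to the remaining two hypotheses yields $||x+s|-|y+s||\leq 2u<|x-y|=|(x+s)-(y+s)|$, so $x+s$ and $y+s$ also have strictly opposite signs. After relabeling we may therefore assume $x>0>y$, and since $x+s>y+s$ this forces $x+s>0>y+s$.

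With the signs known, I would rewrite the four hypotheses as the four linear inequalities
\[
|x-a|\leq u,\quad |y+a|\leq u,\quad |(x+s)-b|\leq u,\quad |(y+s)+b|\leq u.
\]
Subtracting the first from the third gives $|s-(b-a)|\leq 2u$; subtracting the second from the fourth gives $|s+(b-a)|\leq 2u$. Adding these two estimates via the triangle inequality yields $2|b-a|\leq 4u$, i.e.\ $|a-b|\leq 2u$, and then substituting back into $|s-(b-a)|\leq 2u$ gives $|s|\leq |b-a|+2u\leq 4u$.

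The only slightly delicate step is the opening sign analysis; everything after that is routine. I expect that will also be the step the author highlights, since it is exactly the point where the hypothesis $|x-y|>2u$ gets used.
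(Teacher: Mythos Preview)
Your proof is correct and follows essentially the same route as the paper: both arguments hinge on showing that $x$ and $y$ (and likewise $x+s$ and $y+s$) lie on opposite sides of the origin, after which the conclusions drop out of straightforward linear arithmetic. The only cosmetic differences are that the paper first argues $a,b\geq 0$ and pins down the intervals containing $x,y,x+s,y+s$ explicitly, whereas you reach the sign conclusion directly via the identity $|x-y|=||x|-|y||$ when $xy\geq 0$; the endgame arithmetic is then a trivial reshuffling of the same four linear bounds.
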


\begin{proof}
If $a<0$ then $|x|,|y|<u$ and $|x-y|<2u$. So $a\geq 0$ and
in the same way we show that $b\geq 0$. Without loss of generality we may
assume $x<y$, hence $x\in(-a-u,-a+u),$ $y\in(a-u,a+u),$ $x+s\in(-b-u,-b+u),$
$y+s\in(b-u,b+u)$. Thus $2a-2u\leq y-x\leq 2a+2u$ and $2b-2u\leq (y+s)-(x+s)
\leq 2b+2u$ and therefore $|a-b|\leq 2u$. Moreover, $-b+a-2u\leq s\leq -b+a+2u$
and we get $|s|\leq |a-b|+2u\leq 4u$. 
\end{proof}

\begin{proof}[Proof of Theorem \ref{maxreal}]
We may and will assume that
\[
p:=\max_{1\leq k\leq n}\Pr(||S_{k}|-a_{k}|>t)\in (0,1/30).
\]
Let
\[
I_{1}:=\{k\colon\ a_{k}\leq 2t\},\quad I_{2}:=\{k\colon\ \Pr(|S_{k}-\tilde{S}_{k}|>2t)>5p\}
\]
and
\[
I_{3}:=\{1,\ldots,n\}\setminus(I_{1}\cup I_{2}).
\]

First we show that
\begin{equation}
\label{real1}
\Pr\Big(\max_{k\in I_{1}}||S_{k}|-a_{k}|>11t\Big)\leq 3p.
\end{equation}
Indeed, notice that for all $k$, $a_{k}>-t$ (otherwise $p=1$). Therefore by the L\'evy-Ottaviani
inequality \eqref{L-O},
\begin{align*}
\Pr\Big(\max_{k\in I_{1}}||S_{k}|-a_{k}|>11t\Big)
&\leq
\Pr(\max_{k\in I_{1}}|S_{k}|>9t)
\leq 3\max_{k\in I_{1}}\Pr(|S_{k}|>3t)
\\
&\leq
  3\max_{k\in I_{1}}\Pr(||S_{k}|-a_{k}|>t)\leq 3p.
\end{align*}

Next we prove that
\begin{equation}
\label{real2}
\Pr\Big(\max_{k\in I_{2}}||S_{k}|-a_{k}|>11t\Big)\leq 5p.
\end{equation}
Let us take $k\in I_{2}$ and define the following events
\[
A_{1}:=\{|S_{k}-\tilde{S}_{k}|>2t\},\quad A_{2}:=A_{1}\cap \{|S_{k+1,n}|>4t\}
\]
and
\[
B:=\{||S_{k}|-a_{k}|\leq t, ||\tilde{S}_{k}|-a_{k}|\leq t,
||S_{n}|-a_{n}|\leq t, ||\tilde{S}_{k}+S_{k+1,n}|-a_{n}|\leq t\}.
\]
We have $\Pr(A_{1})+\Pr(B)>5p+1-4p>1$, hence $A_{1}\cap B\neq \emptyset$ and by
Lemma \ref{lem1}, $|a_{k}-a_{n}|\leq 2t$. Also by Lemma \ref{lem1}, $A_{2}\cap B= \emptyset$,
hence $\Pr(A_{2})+\Pr(B)\leq 1$. Therefore $5p\Pr(|S_{k+1,n}|>4t)\leq \Pr(A_{2})\leq 4p$.
Thus for all $k\in I_{2}$, $|a_{k}-a_{n}|\leq 2t$ and
$\Pr(|S_{k+1,n}|\leq 4t)\geq 1/5$. Let
\[
\tau:=\inf\{k\in I_{2}:||S_{k}|-a_{k}|>11t \}.
\]
Then
\begin{align*}
\frac{1}{5}\Pr(\tau=k)
&\leq \Pr(\tau=k,|S_{k+1,n}|\leq 4t)
\\
&\leq \Pr(\tau=k,||S_{n}|-a_{n}|>11t-4t-|a_{k}-a_{n}|)
\\
&\leq\Pr(\tau=k,||S_{n}|-a_{n}|>t)
\end{align*}
and
\begin{align*}
\Pr\Big(\max_{k\in I_{2}}||S_{k}|-a_{k}|>11t\Big)
& = \sum_{k\in I_{2}}\Pr(\tau=k)
\leq 5\sum_{k\in I_{2}}\Pr(\tau=k,||S_{n}|-a_{n}|>t)
\\
&\leq 5\Pr(||S_{n}|-a_{n}|>t) \leq 5p.
\end{align*}

Finally we show
\begin{equation}
\label{real3}
\Pr\Big(\max_{k\in I_{3}}||S_{k}|-a_{k}|>11t\Big)\leq 21p.
\end{equation}
To this end take any $k\in I_{3}$ and notice that
\begin{align*}
2\max\{\Pr(|S_{k}-a_{k}|\leq t),&\Pr(|S_{k}+a_{k}|\leq t)\}
\\
&\geq
\Pr(|S_{k}-a_{k}|\leq t)+\Pr(|S_{k}+a_{k}|\leq t)
\\
&\geq \Pr(||S_{k}|-a_{k}|\leq t)\geq 1-p\geq \frac{29}{30}.
\end{align*}
If $|x-a_{k}|\leq t$ and $|y+a_{k}|\leq t$ then $|x-y|\geq 2a_{k}-2t>2t$.
Therefore
\begin{align*}
5p&\geq\Pr(|S_{k}-\tilde{S}_{k}|>2t)
\\
&\geq
 \Pr(|S_{k}-a_{k}|\leq t,|\tilde{S}_{k}+a_{k}|\leq t)+
 \Pr(|S_{k}+a_{k}|\leq t,|\tilde{S}_{k}-a_{k}|\leq t) 
\\
&=2\Pr(|S_{k}-a_{k}|\leq t)\Pr(|S_{k}+a_{k}|\leq t).
\end{align*}
So for any $k\in I_{3}$ we may choose $b_k=\pm a_k$ such that
\[
\Pr(|S_{k}-b_{k}|\leq t)\leq \frac{30}{29}5p\leq 6p.
\]
Therefore
\[
\Pr(|S_{k}+b_{k}|> t)\leq \Pr(||S_{k}|-a_{k}|>t)+\Pr(|S_{k}-b_{k}|\leq t)
\leq 7p
\]
and by the L\'evy-Ottaviani inequality \eqref{L-O},
\begin{align*}
\Pr(\max_{k\in I_{3}}||S_{k}|-a_{k}|>11t)
&\leq \Pr(\max_{k\in I_{3}}|S_{k}+b_{k}|>11t)
\\
&\leq 3\max_{k\in I_{3}}\Pr(|S_{k}+b_{k}|>\frac{11}{3}t)
\leq 21 p.
\end{align*}
This shows (\ref{real3}).

Inequalities (\ref{real1}), (\ref{real2}) and (\ref{real3}) imply
(\ref{real0}). 
\end{proof}

\begin{proof}[Proof of Proposition \ref{maxsqr}]
It is enough to consider the case when
\[
p:=\max_{1\leq k\leq n}\Pr\big(\big||S_{k}|^{2}-a_{k}\big|\geq t\big)<\frac{1}{6}.
\]

Notice that
\[
\Pr\big(\big||S_{n}|^{2}-|S_{k}|^{2}-(a_{n}-a_{k})\big|\geq 2t\big)
\leq
\Pr\big(\big||S_{n}|^{2}-a_{n}\big|\geq t\big)+\Pr\big(\big||S_{k}|^{2}-a_{k}\big|\geq t\big)
\]
Therefore
\[
\Pr\big(\big||S_{k+1,n}|^{2}+2\langle S_{k},S_{k+1,n}\rangle-(a_{n}-a_{k})\big|\geq 2t)
\leq 2p\]
and by the symmetry
\[
\Pr\big(\big||S_{k+1,n}|^{2}-2\langle S_{k},S_{k+1,n}\rangle-(a_{n}-a_{k})\big|\geq 2t\big)
\leq 2p.
\]
Thus by the triangle inequality
\[
\Pr\big(\big||S_{k+1,n}|^{2}-(a_{n}-a_{k})\big|\geq 2t\big)\leq 4p.
\]

Now let $x\in {\mathcal H}$ be such that $||x|^{2}-a_{k}|\geq 3t$ then by the
triangle inequality and symmetry
\begin{align*}
1-4p
&\leq \Pr\big(\big||x|^{2}+|S_{k+1,n}|^{2}-a_{n}\big|\geq t\big)
\\
&\leq \Pr\big(\big||x|^{2}+|S_{k+1,n}|^{2}+2\langle x,S_{k+1,n}\rangle-a_{n}\big|\geq t\big)
\\  
&\phantom{\leq}+
\Pr\big(\big||x|^{2}+|S_{k+1,n}|^{2}-2\langle x,S_{k+1,n}\rangle -a_{n}\big|\geq t\big)
\\
&=2\Pr\big(\big||x|^{2}+|S_{k+1,n}|^{2}+2\langle x,S_{k+1,n}\rangle-a_{n}\big|\geq t\big)
\\
&=2\Pr\big(\big||x+S_{k+1,n}|^{2}-a_{n}\big|\geq t\big).
\end{align*}
So for any $x\in {\mathcal H}$ and $k=1,2,\ldots,n$,
\begin{equation}
\label{impl1}
\big||x|^{2}-a_{k}\big|\geq 3t
\ \Rightarrow\
\Pr\big(\big||x+S_{k+1,n}|^{2}-a_{n}\big|\geq t\big) \geq \frac{1}{2}(1-4p)\geq \frac{1}{6}.
\end{equation}

Now let
\[
\tau:=\inf\big\{k\leq n\colon\ \big||S_{k}|^{2}-a_{k}\big|\geq 3t\big\},
\]
then since $\{\tau =k\}\in \sigma(X_{1},\ldots,X_{k})$ we get by \eqref{impl1},
\[
\Pr\big(\tau=k,\big||S_{n}|^{2}-a_{n}\big|\geq t\big)\geq \frac{1}{6}\Pr(\tau=k).
\]
Hence
\[
\Pr\big(\big||S_{n}|^{2}-a_{n}\big|\geq t\big)
\geq \frac{1}{6}\sum_{k=1}^{n}\Pr(\tau=k)=
\frac{1}{6}\Pr\Big(\max_{1\leq k\leq n} \big||S_{k}|^{2}-a_{k}\big|\geq 3t\Big)
\]
and the proposition follows.
\end{proof}

\begin{proof}[Proof of Corollary \ref{maxrega}]
We may consider variables $S_i,X_{i+1},\ldots,X_n$ instead of $X_1,\ldots,X_n$ and assume that $i=1$.
Let $a:=\min_{1\leq k\leq n} a_{k}$. We will analyze two cases.

\medskip

\noindent
{\bf Case 1.} $a\leq 3t$. Then by (\ref{coef}) we get $a_{k}\leq (3\alpha+\beta)t$ for all $k$. Thus
by the L\'evy inequality,
\begin{align*}
\Pr\Big(\max_{k}||S_{k}|-a_{k}|\geq (6\alpha+2\beta+1)t\Big)&\leq
\Pr\Big(\max_{k}|S_{k}|\geq (3\alpha+\beta+1)t\Big)
\\
&\leq
2\Pr(|S_{n}|\geq (3\alpha+\beta+1)t)
\\
&\leq 2\Pr(||S_{n}|-a_{n}|\geq t)
\\
&\leq 
2\max_{k}\Pr(||S_{k}|-a_{k}|\geq t).
\end{align*}

\medskip

\noindent
{\bf Case 2.} $a\geq 3t$. Notice first that for any $s>0$ we have
\begin{equation}
\label{incl}
\{||S_{k}|^{2}-a_{k}^{2}|\geq s(2a_{k}+s)\}\subset
\{||S_{k}|-a_{k}|\geq s\}\subset
\{||S_{k}|^{2}-a_{k}^{2}|\geq sa_{k}\}.
\end{equation}
Indeed, the last inclusion follows since
$||S_{k}|^{2}-a_{k}^{2}|=(|S_{k}|+a_{k})||S_{k}|-a_{k}|\geq a_{k}||S_{k}|-a_{k}|$. 
To see the first inclusion in \eqref{incl} observe that
\begin{align*}
\{||S_{k}|^{2}-a_{k}^{2}|\geq s(2a_{k}+s)\}
&\subset
\{||S_{k}|-a_{k}|\geq s\}\cup\{|S_{k}|+a_{k}\geq 2a_{k}+s\}
\\
&\subset \{||S_{k}|-a_{k}|\geq s\}.
\end{align*}

Now by \eqref{incl} we get
\begin{align*}
\Pr\Big(\max_{k}|&|S_{k}|-a_{k}|\geq (6\alpha+2\beta+1)t\Big)
\\
&\leq 
\Pr\Big(\max_{k}||S_{k}|^{2}-a_{k}^{2}|\geq (6\alpha+2\beta+1)at\Big).
\end{align*}
Hence by Proposition \ref{maxsqr},
\[
\Pr\Big(\max_{k}||S_{k}|-a_{k}|\geq (6\alpha+2\beta+1)t\Big)
\leq
6\max_{k}\Pr\big(||S_{k}|^{2}-a_{k}^{2}|\geq \frac{1}{3}(6\alpha+2\beta+1)at\big).
\]
But $\frac{1}{3}(6\alpha+2\beta+1)a\geq 2(\alpha a+\beta t)+t\geq 2a_{k}+t$ for all $k$ by
\eqref{coef}. Therefore by \eqref{incl},
\begin{align*}
\Pr\Big(\max_{k}||S_{k}|-a_{k}|\geq (6\alpha+2\beta+1)t\Big)
&\leq 6\max_{k}\Pr(||S_{k}|^{2}-a_{k}^{2}|\geq t(2a_{k}+t))
\\
&\leq 6\max_{k}\Pr(||S_{k}|-a_{k}|\geq t).
\end{align*}
\end{proof}

\section{Example}

Let us fix a positive integer $n$ and put
\[
I_n=\Big\{j\in \zet\colon\ \frac{n}{2}\leq j\leq n\Big\}.
\]
Let $t_j=\frac{n^2+j}{j}$ for $j=1,2,\ldots, n$, then
\begin{equation}
\label{propt}
jt_j=n^2+j \quad \mbox{ and }\quad (j-1)t_j\leq n^2 \quad \mbox{ for }j\in I_n.
\end{equation}

Let $N$ be a large integer (to be fixed later) and let $F$ be the space of all double-indexed sequences
$a=(a_{i,j})_{0\leq i\leq N,j\in I_n}$ with the norm
\[
\Big\|(a_{i,j})_{0\leq i\leq N,j\in I_n}\Big\|=
\max_{j\in I_n}\bigg(|a_{0,j}|+t_j\sum_{1\leq i_1<i_2<\ldots<i_j\leq N}\sum_{s=1}^j|a_{i_s,j}|
\bigg).
\]
Let $(e_{i,j})$ be a standard basis of $F$, so that $(a_{i,j})=\sum_{i,j}a_{i,j}e_{i,j}$. 

Define random vectors $X_1,X_2,\ldots,X_n$  by the formula
\[
X_l=\sum_{j\in I_n}(Y_{l,j}e_{0,j}+R_{l,j}e_{N_l,j}),
\]
where $(Y_{l,j},R_{l,j})_{l\leq n,j\in I_n}$ and $(N_l)_{l\leq n}$ are independent r.v's,
$\Pr(R_{l,j}=\pm 1)=1/2$, $Y_{l,j}$ are symmetric $\Pr(|Y_{l,j}|=\frac{1}{2n})=1-\Pr(Y_{k,j}=0)=p_n$
(with $p_n$ a small positive number to be specified later) and $N_l$ are uniformly sampled from the set
$\{1,\ldots,N\}$.

Obviously $X_1,X_2,\ldots,X_n$ are i.i.d. and symmetric. As usual we set $S_k=X_1+X_2+\ldots+X_k$.
Let 
\[
A=\{ N_1,N_2,\ldots,N_n \mbox{ are pairwise distinct}\}.
\]
Notice that $\Pr(A^c)\rightarrow 0$ when $N\rightarrow\infty$.  On the set $A$ we have
for $k\leq n$,
\[
\|S_k\|=\max_{j\in I_n}\Big(\Big|\sum_{l=1}^k Y_{l,j}\Big|+t_j\min\{k,j\}\Big).
\]
For $j>k$ we have by \eqref{propt},
\[
\Big|\sum_{l=1}^k Y_{l,j}\Big|+t_j\min\{k,j\}< 1+t_j(j-1)\leq n^2+1,
\]
hence on the set $A$, for $k\in I_n$ we get
\[
\|S_k\|=\max_{j\in I_n,j\leq k}\Big(\Big|\sum_{l=1}^k Y_{l,j}\Big|+n^2+j\Big)
=\Big|\sum_{l=1}^k Y_{l,k}\Big|+n^2+k.
\]

Take $0<t< \frac{1}{2nC_1}$ then for $k\in I_n$,
\[
\Pr(|\|S_k\|-(n^2+k)|\geq t)\leq \Pr(A^c)+\Pr\Big(\sum_{l=1}^kY_{l,k}\neq 0\Big)\leq \Pr(A^c)+kp_n
\]
and
\[
\Pr\Big(\max_{k\in I_n}|\|S_k\|-(n^2+k)|\geq tC_1\Big)\geq
\Pr\Big(\max_{k\in I_n}\Big|\sum_{l=1}^kY_{l,k}\Big|\neq 0\Big)-\Pr(A^c).
\]
The last number is of order $n^2p_n$ if $N$ is large and $p_n$ is small. This shows that
if \eqref{max_quest} holds for $i=\lceil n/2\rceil $ in $F$ then $C_2$ must be of order $n$. So \eqref{max_quest} cannot
hold with absolute constants $C_1$ and $C_2$ in (infinite dimensional) separable Banach spaces.

\noindent
Institute of Mathematics\\
University of Warsaw\\ 
Banacha 2\\
02-097 Warszawa\\
Poland\\
{\tt rlatala@mimuw.edu.pl}

\end{document}